\newtheorem{thm}{Theorem}[section]
\newtheorem{lem}[thm]{Lemma}
\newtheorem{exa}[thm]{Example}
\theoremstyle{definition}
\newtheorem{defn}{Definition}[section]
\newcommand{\scr}[1]{\mathscr #1}
\definecolor{wco}{rgb}{0.5,0.2,0.3}
\numberwithin{equation}{section} \theoremstyle{remark}
\newtheorem{rem}{Remark}[section]
\def\R{\mathbb R}
\def\ff{\frac}
\def\ss{\sqrt}
\def\B{\mathbf B}
\def\N{\mathbb N}
\def\kk{\kappa} 
\def\dd{\delta}  \def\vv{\varepsilon} \def\rr{\rho}
\def\<{\langle} \def\>{\rangle} \def\GG{\Gamma} \def\gg{\gamma}
\def\d{\text{\rm{d}}} \def\bb{\beta} \def\aa{\alpha} \def\D{\scr D}
  \def\si{\sigma} 
 \def\beq{\begin{equation}}  \def\F{\scr F}
\def\e{\text{\rm{e}}}  \def\OO{\Omega}  
 \def\tt{\tilde} 
 \def\P{\mathbb P} 
\def\C{\scr C}
\def\E{\mathbb E} 
  \def\LL{\Lambda}
 \def\B{\scr B}  
\def\to{\rightarrow}\def\ll{\lambda}
\def\8{\infty}  
\def\3{\triangle}\def\1{\lesssim}
\renewcommand{\bar}{\overline}
\renewcommand{\tilde}{\widetilde}
\newcommand{\barray}{\begin{array}{ll}}
\newcommand{\earray}{\end{array}}
\newcommand{\wdt}{\widetilde}
\newcommand{\bea}{\begin{displaymath}\begin{array}{rl}}
\newcommand{\eea}{\end{array}\end{displaymath}}
\title{Stationary Distributions for Retarded Stochastic Differential Equations without
Dissipativity \thanks{This research was supported in part by the National Science Foundation
under DMS-1207667.}}
\author{Jianhai Bao,\thanks{Department of Mathematics, Central South University,
Changsha, Hunan, 410075, P.R. China, majb@swansea.ac.uk}
 \and George
Yin,\thanks{Department of Mathematics, Wayne State University,
Detroit, MI 48202, USA, gyin@math.wayne.edu}  \and Chenggui
Yuan\thanks{Department of Mathematics, Swansea University,
Singleton Park, SA2 8PP, UK, C.Yuan@swansea.ac.uk}}
\begin{document}

\maketitle

\begin{abstract}
Retarded stochastic differential equations (SDEs) constitute a large
collection of systems arising in various real-life applications.
Most of the existing results make crucial use of dissipative
conditions. Dealing with ``pure delay'' systems in which both the
drift and the diffusion coefficients depend only on the
arguments with delays, the existing results become not applicable. This work
uses a variation-of-constants formula to overcome the difficulties
due to the lack of the information at the current time.
This paper establishes existence and
uniqueness of stationary distributions for  retarded
SDEs that need not satisfy dissipative conditions. The retarded SDEs
considered in this paper also cover SDEs of neutral type and SDEs
driven by L\'{e}vy processes that might not admit finite second
moments.

\vskip 0.2 true in \noindent {\bf Keywords:} retarded stochastic
differential equation,  stationary distribution,
variation-of-constants formula, neutral type equation, L\'{e}vy process.

\vskip 0.2 true in \noindent
 {\bf AMS Subject Classification:}\    60H15, 60J25, 60H30, 39B82

 \end{abstract}

\newpage

\section{Introduction}
Retarded stochastic differential equations (SDEs) are such SDEs
involving retarded arguments. For numerous systems involving delays
and arising in real world applications such as in life insurance,
risk management, wireless communication, and optimal control of
multi-agent systems, one has to take long-term performance into
consideration. Thus, an important problem concerning retarded SDEs
is the existence of stationary distributions (see Definition
\ref{stationary} in what follows).

\smallskip

One of the main approaches in
the literature to date is to incorporate certain dissipativity to establish
the existence of stationary distributions
for retarded SDEs. The dissipativity is normally assured by imposing
 information of the current time with certain decay conditions.
Such an idea has been used extensively. For instance,
utilizing the remote start method (i.e., dissipative method), Bao et
al. \cite{BYY}
 discussed several class of retarded SDEs which include  SDEs with variable
 delays  and SDEs with jumps; applying an exponential-type estimate, Bo and
Yuan
 \cite{BY13} investigated retarded SDEs driven by Poisson jump
 processes; adopting the Arzel\`{a}--Ascoli tightness characterization, Es-Sarhir et
al. \cite{ESV} and Kinnally and Williams \cite{KW} considered
 retarded SDEs with super-linear drift terms and positivity constraints
 respectively. However, the existing literature
 cannot deal with the following seemingly simple linear retarded SDE on the real line $\R$,
\begin{equation}\label{A1}
\d X(t)=-X(t-1)\d t+\si X(t-1)\d W(t),\ \ \ X_0=\xi,
\end{equation}
where $\si\in\R$ and $\{W(t)\}_{t\ge0}$ is a real-valued standard
Brownian motion.  Observe that it is impossible to choose
$\ll_1>\ll_2>0$ such that
\begin{equation*}
-2xy+\si^2y^2\le -\ll_1x^2+\ll_2y^2,\ \ \ x,y\in\R
\end{equation*}
holds for some appropriate $\si\in\R.$ That is, \eqref{A1} does not
obey a dissipative condition. Therefore, the techniques used in
\cite{BYY,BY13,ESV,KW} are not
applicable to
\eqref{A1}. As can be seen that the main problem is because of the lack of the information
at the current time, so no dissipative conditions can be used.

In the
well-known work  \cite{York}, Yorke treated deterministic
systems with pure delays. His work has stimulated much of the subsequent work resulting
in a vast literature on the pure delay equations in the deterministic setup.
Our consideration in this paper is a
generalization of the model in \cite{York} in that both the drift
and diffusion coefficients involve only retarded elements. The
right-hand sides of such differential equations do not involve
information on current time. Consequently, it is not possible to use
any dissipative conditions.
With regard to uniqueness of stationary
distributions of retarded SDEs, by an asymptotic coupling method,
Hairer et al. \cite{HMS} discussed a wide range of  non-degenerate
retarded SDEs under some mild assumptions,
 which   need not
guarantee existence of a stationary  distribution, however.  Scheutzow
\cite{S12}
 studied a very simple
linear retarded SDE without the drift term.

\smallskip
In this paper, we aim to obtaining existence and uniqueness of
stationary distributions  for a class of retarded SDEs, which
includes \eqref{A1} as a special case. The key word is that we focus
on systems {\it without satisfying dissipative conditions}. To
overcome the difficulties,
we
use the variation-of-constants formula, which has been applied
successfully in Gushchin and K\"uchler \cite{GK} and Liu \cite{L10}.
In the aforementioned references, the authors
considered
stationary solutions
 (see Remark \ref{remark}), for finite-dimensional and infinite-dimensional retarded
Ornstein-Uhlenbeck (O-U) processes,
  respectively.
It is also worth pointing out that the variation-of-constants
formula together with the semi-martingale characteristics has been
utilized to study the existence of stationary distributions for a
class of retarded SDEs, which  do not include \eqref{A1}, however.

\smallskip
The
remainder of the paper is organized as follows. Section \ref{sec}
reviews the variation-of-constants formula for deterministic linear
retarded systems and collects some auxiliary lemmas. Section
\ref{Sec2} is devoted to existence and uniqueness of stationary
distributions for semi-linear retarded SDEs by the
variation-of-constants formula and the stability-in-distribution
approach. Section \ref{Sec3} generalizes the theory established in
Section \ref{Sec2} to  SDEs of neutral type. The last section
focuses on  existence and uniqueness of stationary distributions for
retarded SDEs driven by L\'{e}vy processes, which need not have
finite second moments. The key there is to use the variation-of-constants formula
together with the
 tightness criterion due to Kurtz.

\section{Preliminary}\label{sec}
We start with some terminologies and notation. Let $(\OO,\P,\F)$ be
a probability space together with a filtration
$\{\F_t\}_{t\ge0}$ satisfying the usual conditions (i.e.,
$\F_{t_+}:=\cap_{s>t}\F_s=\F_t$, $\F_s \subset \F_t$ for $s\le t$, and $\F_0$ contains all $\P$-null
sets).
   Let
$\{W(t)\}_{t\ge0}$ and $\{Z(t)\}_{t\ge0}$ be  real-valued Brownian
motion and  L\'{e}vy process defined on the
stochastic basis $(\OO,\P,\F,\{\F_t\}_{t\ge0})$, respectively,
For each $t\ge0,$ $Z(t)$ is
infinitely divisible
by virtue of \cite[Proposition 1.3.1, p.43]{A09}. By
the L\'{e}vy--Khintchine formula \cite[Theorem 1.2.14, p.29]{A09},
the symbol or characteristic exponent of $Z(t)$
satisfies
\begin{equation*}
\Psi(\xi):=\ff{1}{2}a\xi^2+ib\xi+\int_{z\neq0}\{1-\e^{-\xi z}+i\xi
z
{{\mathbf 1}}_{\{|z|\le1\}}\}\nu(\d z),
\end{equation*}
in which $a\ge0$, $b\in\R$ and $\nu(\cdot)$ is a L\'{e}vy measure,
i.e., a $\si$-finite measure on $\R\setminus\{0\}$ such that
$$\int_{z\neq0}(1\wedge|z|^2)\nu(\d z)<\8.$$ Fix $\tau\in(0,\8)$,
which is referred to as the delay. Recall that a path $f:
[-\tau,0]\mapsto\R$ is called c\'{a}dl\'{a}g if it is
right-continuous having finite left-hand limits.
For
a subinterval $U\subset(-\8,\8)$, $C(U;\R)$ (resp. $D(U;\R)$) denotes the
family of all  real-valued continuous (resp. c\'{a}dl\'{a}g) functions
defined on $U$. Let $\C:=C([-\tau,0];\R^n)$ equipped with the
uniform metric
$\|\zeta\|_\8:=\sup_{-\tau\le\theta\le0}|\zeta(\theta)|$ for
$\zeta\in\C$, and $\D:=D([-\tau, 0];\R)$ endowed with the Skorohod
metric
\begin{equation*}
\d_S(\xi,\eta):=\inf_{\ll\in\LL}\{\|\ll\|^\circ\vee\|\xi-\eta\circ\ll\|_\8\},\
\ \ \xi,\eta\in\D.
\end{equation*}
Here $\LL$ is the class of increasing homeomorphisms satisfying
\begin{equation*}
\|\ll\|^\circ:=\sup_{-\tau\le
s<t\le0}\Big|\log\ff{\ll(t)-\ll(s)}{t-s}\Big|<\8,
\end{equation*}
and $\eta\circ\ll$ means the composition of mappings $\eta$ and
$\ll$. Under the uniform metric $\|\cdot\|_\8$, the space $\D$ is
complete but not separable,  whereas, under the Skorohod metric
$\d_S$, $\D$ is complete and separable (see, e.g., \cite[Theorem
12.2, p.128]{B}). For more details on the Skorohod metric, we refer
to \cite[Chapter 4]{B}. For a continuous (resp. c\'{a}dl\'{a}g) function
$f:[-\tau,\8)\mapsto\R$ with $t\ge0$, let $f_t\in\C$ (resp. $f_t\in\D$) be such
that $f_t(\theta)=f(t+\theta),\theta\in[-\tau,0]$. As usual,
$\{f_t\}_{t\ge0}$ is called the segment process of
$\{f(t)\}_{t\ge-\tau}$.  The
 notation $\mathcal {P}(\C)$ (resp. $\mathcal {P}(\D)$) denotes the family of all probability
measures on $(\C,\B(\C))$ (resp. $(\D,\B(\D))$ and $\B_b(\C)$ (resp. $\B_b(\D))$
denotes
the set
of bounded and continuous functions $F:\C \mapsto \R$ (resp.  bounded measurable functions  $F:\D\mapsto\R$) endowed
with the uniform norm $\|F\|_0:=\sup|F(\phi)|$.
Use $\mu(\cdot)$ and
$\rr(\cdot)$ to denote the finite signed measures defined on
$[-\tau,0]$. Let $\mathbb{C}$ be the set of all complex numbers and
$\mbox{Re}(z)$ stand for the real part of $z\in\mathbb{C}$.
Throughout this paper, $c>0$ is used as a generic
positive constant whose values
may change for different usage.

\smallskip
By the variation-of-constants formula (see, e.g., \cite[Theorem 1.2,
p.170]{HL}), the following linear retarded equation
\begin{equation}\label{a1}
\d Y(t)=\int_{-\tau}^0Y(t+\theta)\mu(\d\theta)\d t,\ \ \
Y_0=\xi\in\C
\end{equation}
has a unique explicit
representation of the solution
\begin{equation*}
Y(t;\xi)=r(t)\xi(0)+\int_{-\tau}^0\int_\theta^0
r(t+\theta-s)\xi(s)\d s\mu(\d \theta),
\end{equation*}
where $r(t)$ is the fundamental solution of \eqref{a1} with the
initial data $r(0)=1$ and $r(\theta)=0$ for $\theta\in[-\tau,0).$
Let
\begin{equation*}
v_0:=\sup\{\mbox{Re}(\ll):\ll\in \mathbb{C},\ \3(\ll)=0\},
\end{equation*}
where
\begin{equation*}
\triangle(\ll):=\ll-\int_{-\tau}^0\e^{\ll s}\mu(\d s),\ \ \ \ll\in
\mathbb{C}.
\end{equation*}
According to Hale and Verduyn Lunel \cite{HL}, $\3(\ll)=0$ is called
the characteristic equation of equation \eqref{a1}. Then, by virtue
of
\cite[Theorem 3.2, p.271]{HL}, for any $\gg>v_0$, there exists
$c=c(\gg)>0$ such that
\begin{equation}\label{eq3}
|r(t)|\le ce^{\gg t},\ \ \ t\ge-\tau.
\end{equation}
For more details on the variation-of-constants formula of general
retarded linear systems, we refer to \cite[Chapter 6 and Chapter
9]{HL}.

\smallskip

Before the end of this section, we collect some preliminary lemmas
for
later use. The first lemma is a generalized Gronwall
inequality and the second one is concerned with Kurtz's criterion on
tightness of laws on $\D$.

\begin{lem}\label{Inq2}
{\rm (\cite[Lemma 8.2]{IN}) Let $u:[0,\8)\mapsto\R_+$ be a
continuous function and $\dd>0, \aa>\bb>0.$ If
\begin{equation*}
u(t)\le\dd+\bb\int_0^t\e^{-\aa(t-s)}u(s)\d s,\ \ \ t\ge0,
\end{equation*}
then $u(t)\le(\dd\aa)/(\aa-\bb)$. }
\end{lem}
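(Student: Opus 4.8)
The plan is to reduce the integral inequality to a first–order linear differential inequality by introducing the convolution term as an auxiliary function. Set
\begin{equation*}
v(t):=\int_0^t\e^{-\aa(t-s)}u(s)\,\d s=\e^{-\aa t}\int_0^t\e^{\aa s}u(s)\,\d s,\qquad t\ge0,
\end{equation*}
so that $v(0)=0$ and, since $u$ is continuous, $v$ is continuously differentiable with
\begin{equation*}
v'(t)=-\aa v(t)+u(t),\qquad t\ge0.
\end{equation*}

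Next I would substitute the hypothesis $u(t)\le\dd+\bb v(t)$ into this identity to obtain the differential inequality
\begin{equation*}
v'(t)\le-\aa v(t)+\dd+\bb v(t)=-(\aa-\bb)v(t)+\dd,\qquad t\ge0,
\end{equation*}
where the sign of $\aa-\bb$ is positive by assumption. Multiplying through by the integrating factor $\e^{(\aa-\bb)t}$ gives $\frac{\d}{\d t}\big(\e^{(\aa-\bb)t}v(t)\big)\le\dd\,\e^{(\aa-\bb)t}$, and integrating from $0$ to $t$ (using $v(0)=0$) yields
\begin{equation*}
\e^{(\aa-\bb)t}v(t)\le\ff{\dd}{\aa-\bb}\big(\e^{(\aa-\bb)t}-1\big)\le\ff{\dd}{\aa-\bb}\,\e^{(\aa-\bb)t},
\end{equation*}
hence $v(t)\le\dd/(\aa-\bb)$ for all $t\ge0$.

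Finally, feeding this bound back into the hypothesis gives
\begin{equation*}
u(t)\le\dd+\bb v(t)\le\dd+\ff{\bb\dd}{\aa-\bb}=\ff{\dd\aa}{\aa-\bb},\qquad t\ge0,
\end{equation*}
which is the claimed estimate. There is no real obstacle here: the only point requiring a moment's care is justifying that $v$ is differentiable and satisfies $v'=-\aa v+u$, which follows from continuity of $u$ and the fundamental theorem of calculus; everything else is the standard Gronwall/integrating-factor computation. (One could alternatively avoid the ODE step by iterating the integral inequality and summing the resulting geometric-type series, but the auxiliary-function argument above is cleaner.)
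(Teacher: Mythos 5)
Your argument is correct: the auxiliary function $v(t)=\int_0^t\e^{-\aa(t-s)}u(s)\,\d s$ is indeed $C^1$ with $v'=-\aa v+u$, the substitution of the hypothesis gives $v'\le-(\aa-\bb)v+\dd$, and the integrating-factor step together with $v(0)=0$ yields $v\le\dd/(\aa-\bb)$ and hence $u\le\dd\aa/(\aa-\bb)$. The paper itself offers no proof to compare against --- it simply cites this as Lemma 8.2 of It\^o and Nisio \cite{IN} --- so your self-contained derivation, which is the standard Gronwall-type argument, is a perfectly adequate substitute.
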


\begin{lem}\label{tightness}
{\rm (\cite[Theorem 8.6, p.137-138]{EG}) For each $t\ge0$, let
$Y^t(\cdot)\in D([0,\tau];\R)$ and assume that
\begin{equation*}
\lim_{K_1\to\8}\limsup_{t\to\8}\P\Big(\sup_{0\le s\le T}|Y^t(s)|\ge
K_1\Big)=0 \mbox{ for each }T\le\tau,
\end{equation*}
and, for all $0\le u\le\dd,v\le T$, that there exists $\bb>0$ such
that
\begin{equation*}
\begin{split}
&\E^t_v\{|Y^t(u+v)-Y^t(u)|\wedge1\}\le\E^t_v\gg(t,\dd)\\
&\lim_{\dd\to0}\limsup_{t\to\8}\E\gg(t,\dd)=0,
\end{split}
\end{equation*}
where $\E^t_v$ denotes the conditional expectation with respect to
$\F^t_v$, the minimal $\si$-algebra that measures $\{Y^t(s)\}_{0\le
s\le v}$. Then $\{\mathcal {L}(Y^t(s)),s\in[0,\tau] \}_{t\ge0}$ is
tight in $D([0,\tau];\R)$, where $\mathcal {L}(\eta)$ means the law
of random variable $\eta.$ }
\end{lem}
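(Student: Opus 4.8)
The statement is a specialization of the general tightness criterion of Kurtz on the Skorohod space, so the plan is not to reprove that criterion but to indicate how the two displayed hypotheses feed into it. Recall that a family $\{Y^t\}_{t\ge0}$ of $D([0,\tau];\R)$-valued processes is tight once (i) a compact containment condition holds --- for every $\vv>0$ and every $T\le\tau$ there is a compact $K\subset\R$ with $\inf_{t\ge0}\P\big(Y^t(s)\in K\ \text{for all}\ s\le T\big)\ge1-\vv$ --- and (ii) the c\'{a}dl\'{a}g modulus of continuity is asymptotically negligible, i.e. $\lim_{\dd\to0}\limsup_{t\to\8}\E\big[w'(Y^t,\dd,T)\big]=0$, where $w'(f,\dd,T)$ is the infimum, over partitions $0=t_0<t_1<\cdots<t_m=T$ with $\min_i(t_i-t_{i-1})>\dd$, of $\max_i\sup_{s,r\in[t_{i-1},t_i)}|f(s)-f(r)|$.

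First I would note that the first displayed hypothesis is literally the compact containment condition: since closed bounded subsets of $\R$ are compact, $\lim_{K_1\to\8}\limsup_{t\to\8}\P(\sup_{0\le s\le T}|Y^t(s)|\ge K_1)=0$ yields (i) by taking $K=[-K_1,K_1]$ with $K_1$ large. Next, since $(x,y)\mapsto|x-y|\wedge1$ is a bounded metric on $\R$ equivalent to the Euclidean one, the second displayed hypothesis is exactly the conditional-increment (Aldous-type) control used in Kurtz's criterion: $\E^t_v\{|Y^t(u+v)-Y^t(u)|\wedge1\}\le\E^t_v\gg(t,\dd)$ for $0\le u\le\dd$ and $v\le T$, together with $\lim_{\dd\to0}\limsup_{t\to\8}\E\gg(t,\dd)=0$. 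Feeding (i) and this control into the general criterion produces $\E\big[w'(Y^t,\dd,T)\big]\to0$ as $\dd\to0$, uniformly in large $t$, which is (ii), and tightness of $\{\mathcal L(Y^t(s)),s\in[0,\tau]\}_{t\ge0}$ on $D([0,\tau];\R)$ follows.

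The one genuinely delicate point --- and the reason the cited theorem is worth quoting rather than re-deriving --- is the passage from increment control (even along stopping times) to control of the full Skorohod modulus $w'$: a c\'{a}dl\'{a}g path may make a single large jump inside every short interval without $w'$ being large, whereas two large jumps that fall close together must be excluded. The standard device is a stopping-time iteration: successively take the first time after $t_{i-1}$ at which the path has moved by more than a prescribed threshold, and use the uniform increment bound to show that these times cannot cluster, thereby producing an admissible partition that realizes a small $w'$ with high probability. This is precisely the content of \cite[Theorem 8.6]{EG}, and for the purposes of this paper it suffices to invoke that result directly.
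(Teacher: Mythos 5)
The paper gives no proof of this lemma at all: it is stated purely as a citation of \cite[Theorem 8.6]{EG}, so your proposal --- which correctly matches the first hypothesis to compact containment, the second to the Aldous/Kurtz conditional-increment control, and defers the genuinely hard passage to the Skorohod modulus $w'$ to the cited theorem --- is consistent with, and in fact more informative than, the paper's treatment. Nothing in your sketch conflicts with the source result, so this is essentially the same (citation-based) approach.
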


\section{Stationary Distributions for  Retarded
SDEs}\label{Sec2} In this section, we consider a  semi-linear
retarded SDE
of the form
\begin{equation}\label{a5}
\d X(t)=\Big(\int_{-\tau}^0X(t+\theta)\mu(\d\theta)\Big)\d
t+\si(X_t)\d W(t),\ \ \ X_0=\xi\in\C,
\end{equation}
where $\si:\C\mapsto\R$ is Borel measurable and there exists an
$L>0$ such that
\begin{equation}\label{a6}
|\si(\xi)-\si(\eta)|^2\le
L\Big(|\xi(0)-\eta(0)|^2+\int_{-\tau}^0|\xi(\theta)-\eta(\theta)|^2\rho(\d\theta)\Big),
\ \ \ \xi,\eta\in\C,
\end{equation}
where $\rr(\cdot)$ is a measure on $[-\tau,0].$ Under \eqref{a6}, by
\cite[Theorem 2.1, p.36]{M84}, \eqref{a5} admits  a unique strong
solution $\{X(t;\xi)\}_{t\ge-\tau}$ with the initial segment
$X_0=\xi\in\C.$ Throughout this section, we further assume that the
initial segment $X_0=\xi\in\C$ is independent of $\{W(t)\}_{t\ge0}$.
It seems to be more instructive to present the main line of argument without undue complicated notation.
Thus, we will focus only on  real-valued retarded SDEs  in this paper.

\smallskip

Before stating our main result in this section, we recall the notion
of the stationary distribution (see, e.g., \cite[Definition
2.2.1]{KW}).

\begin{defn}\label{stationary}
{\rm A stationary distribution for \eqref{a5} is a probability
measure $\pi\in\mathcal {P}(\C)$ such that
\begin{equation*}
\pi(F)=\pi(P_tF),\ \ \ t\ge0,
\end{equation*}
where $$\pi(F):=\int_{\C}F(\xi)\pi(\d\xi)$$ and
$$P_tF(\xi):=\E
F(X_t(\xi)) \ \hbox{ for each } \ F\in\B_b(\C).$$ }
\end{defn}

\begin{rem}
{\rm If $\pi\in\mathcal {P}(\C)$ is a stationary distribution of
\eqref{a5} and the initial segment enjoys the same law, by
\cite[Lemma 1.1.9, p.14]{A09}, the independence of $\xi\in\C$ and
$\{W(t)\}_{t\ge0}$ and the smooth property of conditional
expectation,
 one has
\begin{equation*}
\pi(F)=\int_\C\E
F(X_t(\eta))\pi(\d\eta)=\E(\E(F(X_t(\xi)))|\F_0)=\E(F(X_t(\xi))).
\end{equation*}
Then we conclude that $X_t(\xi)$ shares the law $\pi\in\mathcal
{P}(\C)$, i.e., the law of $X_t(\xi)$ is invariant under time
translation. The main result of this section is stated next.
The approach that we are using is based on the idea of stability in distribution
argument.

}
\end{rem}

\begin{thm}\label{diffusion}
{\rm Let $v_0<0$ and assume further that \eqref{a6} holds for a
sufficiently small $L>0$. Then \eqref{a5} has a unique stationary
distribution $\pi\in\mathcal {P}(\C)$. }
\end{thm}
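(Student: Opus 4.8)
The plan is to use the variation-of-constants formula to write the solution of \eqref{a5} explicitly in terms of the fundamental solution $r(t)$ of the deterministic equation \eqref{a1}, and then to exploit the exponential decay estimate \eqref{eq3}, which is available because $v_0<0$ allows us to pick $\gamma\in(v_0,0)$ with $|r(t)|\le c\e^{\gamma t}$. Writing $X(t)=X(t;\xi)$, one expresses $X(t)$ as $r(t)\xi(0)+\int_{-\tau}^0\int_\theta^0 r(t+\theta-s)\xi(s)\,\d s\,\mu(\d\theta)$ plus a stochastic convolution term $\int_0^t r(t-s)\si(X_s)\,\d W(s)$ (this is the natural analogue of the representation displayed for \eqref{a1}; one verifies it by Itô's formula, using that $r$ solves \eqref{a1} in the appropriate sense). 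The first, deterministic, part decays to zero exponentially, so all the long-run behaviour is carried by the stochastic convolution.

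Next I would establish a uniform-in-time second-moment bound: using \eqref{eq3}, the Itô isometry, the Lipschitz/linear-growth bound \eqref{a6}, and the generalized Gronwall inequality of Lemma \ref{Inq2} (with $\delta$ absorbing the initial-data contribution, $\alpha=-2\gamma>0$, and $\beta$ proportional to $L$), one gets $\sup_{t\ge0}\E|X(t;\xi)|^2\le C(1+\|\xi\|_\infty^2)$, provided $L$ is small enough that $\alpha>\beta$; this smallness of $L$ is exactly the hypothesis. From the same circle of estimates one also bounds the modulus of continuity of $X$ on intervals of length $\tau$ uniformly in the starting time, which yields tightness of the family of segment laws $\{\mathcal L(X_t(\xi))\}_{t\ge0}$ in $\mathcal P(\C)$ (via Arzel\`a--Ascoli, or the Kolmogorov--Chentsov criterion applied to the shifted process). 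Any weak limit point of the Cesàro averages $\frac1T\int_0^T \mathcal L(X_t(\xi))\,\d t$, or of $\mathcal L(X_t(\xi))$ itself along a subsequence, is then a stationary distribution by the Krylov--Bogolyubov argument together with the Feller property of $(P_t)$, which follows from \eqref{a6}.

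For uniqueness I would run the stability-in-distribution argument: take two solutions $X(t;\xi)$ and $X(t;\eta)$ driven by the \emph{same} Brownian motion, subtract, apply the variation-of-constants representation to the difference (the deterministic parts both decay like $\e^{\gamma t}$), and estimate $\E|X(t;\xi)-X(t;\eta)|^2$ using \eqref{a6} and Lemma \ref{Inq2} once more; smallness of $L$ gives $\E|X(t;\xi)-X(t;\eta)|^2\le C(1+\|\xi\|_\infty^2+\|\eta\|_\infty^2)\,\e^{\kappa t}$ for some $\kappa<0$, hence $\E\|X_t(\xi)-X_t(\eta)\|_\infty^2\to 0$. This forces $\mathcal L(X_t(\xi))$ and $\mathcal L(X_t(\eta))$ to have the same limit, so the stationary distribution is unique and is in fact the global attractor for the laws.

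\textbf{Main obstacle.} The technically delicate point is justifying the stochastic variation-of-constants representation $X(t)=Y(t;\xi)+\int_0^t r(t-s)\si(X_s)\,\d W(s)$ and, relatedly, controlling the stochastic convolution: $r$ is only c\`adl\`ag (it jumps from $0$ to $1$ at the origin) and is not a classical solution at $t=0$, so one must be careful with the Itô calculus and with interchanging the stochastic integral and the $\mu(\d\theta)$-integral (a stochastic Fubini argument). Once that representation is in hand, the rest is a fairly routine combination of \eqref{eq3}, Itô isometry, \eqref{a6}, and Lemma \ref{Inq2}, with the smallness of $L$ entering precisely through the requirement $\alpha>\beta$ in Gronwall's inequality.
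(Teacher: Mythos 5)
Your core estimates coincide with the paper's: both arguments rest on the stochastic variation-of-constants representation $X(t;\xi)=r(t)\xi(0)+\int_{-\tau}^0\int_\theta^0 r(t+\theta-s)\xi(s)\,\d s\,\mu(\d\theta)+\int_0^t r(t-s)\si(X_s(\xi))\,\d W(s)$ (which the paper simply imports from \cite[Theorem 3.1]{RR}, so the ``main obstacle'' you identify is outsourced there), on the decay bound \eqref{eq3} with $\gg\in(v_0,0)$, on the It\^o isometry combined with \eqref{a6}, and on Gronwall-type inequalities (Lemma \ref{Inq2}) with the smallness of $L$ entering exactly as you describe; your uniqueness argument, contracting two solutions driven by the same Brownian motion, is also the paper's. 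Where you diverge is the existence step: the paper does \emph{not} prove tightness of the segment laws. It verifies the two hypotheses of the stability-in-distribution theorem of \cite{YZM}, namely $\sup_{t\ge0}\sup_{\xi\in U}\E\|X_t(\xi)\|_\8^2<\8$ and $\lim_{t\to\8}\sup_{\xi,\eta\in U}\E\|X_t(\xi)-X_t(\eta)\|_\8^2=0$ (the latter obtained by lifting the pointwise decay of $\E|X(t;\xi)-X(t;\eta)|^2$ to the segment norm via H\"older and Burkholder--Davis--Gundy), from which $\P(t,\xi,\cdot)$ is Cauchy for weak convergence; the limit is then shown to be invariant by the Markov property.

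Your route via Kryloff--Bogoliubov has a genuine gap as written. A uniform bound on $\E\|X_t(\xi)\|_\8^2$ does not give tightness in $\mathcal{P}(\C)$, since bounded subsets of $\C$ are not relatively compact, and the modulus-of-continuity control you invoke does not follow from ``the same circle of estimates'': the second-moment computation yields only $\E|X(t)-X(s)|^2\le c|t-s|$, which is precisely the borderline case in which the Kolmogorov--Chentsov criterion fails to produce tightness in $C([-\tau,0];\R)$. To repair it you would need, for instance, $\E|X(t)-X(s)|^4\le c|t-s|^2$ uniformly over windows of length $\tau$, i.e., uniform fourth-moment bounds obtained by a further Burkholder--Davis--Gundy argument under \eqref{a6}; this is feasible but is an additional layer of estimates that your proposal does not supply. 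The paper's stability-in-distribution argument sidesteps tightness entirely and, as a by-product, yields the exponential convergence $|P_tF(\xi)-\pi(F)|\le c\e^{-\aa t}$ recorded after the theorem; I would either adopt that route or add the missing higher-moment estimates.
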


\begin{proof}
We adopt the stability-in-distribution approach;
see for example, \cite[Theorem
3.2]{YZM}.
Mainly we need to verify the following two conditions hold:
\begin{enumerate}
\item[($\N1$)] $\lim_{t\to\8}\sup_{\xi,\eta\in
U}\E\|X_t(\xi)-X_t(\eta)\|_\8^2=0$;
\item[($\N2$)] $\sup_{t\ge0}\sup_{\xi\in U}\E\|X_t(\xi)\|_\8^2<\8$,
\end{enumerate}
where $U$ is a bounded subset of $\C,$ then $\P(t,\xi,\cdot)$.
Under the aforementioned conditions, the
transition kernel of $X_t(\xi)$, converges weakly to $\pi\in\mathcal
{P}(\C)$. For any $F\in C_b(\C)$, the set of all bounded and continuous
real-valued functions on $\C$, by the Markovian property of
$\{X_t(\xi)\}_{t\ge0}$ (see, e.g., \cite[Theorem 1.1, p.51]{M84}),
one has
\begin{equation*}
P_{t+s}F(\xi)=P_sP_tF(\xi),\ \ \ t,s\ge0.
\end{equation*}
Then, for fixed $t\ge0$, taking $s\to\8$ gives that
\begin{equation*}
\pi(F)=\pi(P_tF)
\end{equation*}
whenever $\P(t,\xi,\cdot)$ converges weakly to $\pi\in\mathcal
{P}(\C)$. Hence, \eqref{a5} admits a stationary distribution
$\pi\in\mathcal {P}(\C)$ provided that $(\N1)$ and $(\N2)$ hold.
In what follows, it
suffices to claim that $(\N1)$ and $(\N2)$ are fulfilled
respectively. According to \cite[Theorem 3.1]{RR}, the unique strong
solution $\{X(t;\xi)\}_{t\ge0}$ can be represented explicitly by
\begin{equation}\label{a7}
X(t;\xi)=r(t)\xi(0)+\int_{-\tau}^0\int_\theta^0
r(t+\theta-s)\xi(s)\d s\mu(\d \theta)+\int_0^tr(t-s)\si(X_s(\xi))\d
W(s),
\end{equation}
in which $\{r(t)\}_{t\ge-\tau}$ is the fundamental solution of
\eqref{a1}.
To proceed, take the difference of the two solutions
with different initial segments  $$\Xi(t;\xi,\eta):=X(t;\xi)-X(t;\eta).$$ By
\eqref{a6},
\eqref{a7},
and the It\^o
isometry imply that
\begin{equation}\label{a2}
\begin{split}
\E&|\Xi(t;\xi,\eta)|^2\\&\le
3\Big\{|r(t)(\xi(0)-\eta(0))|^2+\Big|\int_{-\tau}^0\int_\theta^0
r(t+\theta-s)(\xi(s)-\eta(s))\d s\mu(\d \theta)\Big|^2\\
&\quad+\Big|\int_0^tr(t-s)(\si(X_s(\xi))-\si(X_s(\eta)))\d
W(s)\Big|^2\Big\}\\
&\le
c\Big\{\e^{-2\gg t}\|\xi-\eta\|^2_\8\\
&\quad+L\int_0^t\e^{-2\gg(t-s)}\Big(|\Xi(s;\xi,\eta)|^2+\int_{-\tau}^0|\Xi(s+\theta;\xi,\eta)|^2\rr(\d\theta)\Big)\d
s\Big\}\\
&\le c\Big\{\e^{-2\gg
t}\|\xi-\eta\|^2_\8+L\int_0^t\e^{-2\gg(t-s)}|\Xi(s;\xi,\eta)|^2\d
s\Big\}.
\end{split}
\end{equation}
Multiplying by $\e^{2\gg t}$ on both sides of \eqref{a2} gives that
\begin{equation*}
\e^{2\gg t}\E|\Xi(t;\xi,\eta)|^2\le
c\Big\{\|\xi-\eta\|^2_\8+L\int_0^t\e^{2\gg s}|\Xi(s;\xi,\eta)|^2\d
s\Big\}.
\end{equation*}
Recall that the $c$ above is a generic positive constant. So, the
Gronwall inequality leads to
\begin{equation}\label{b4}
\E|\Xi(t;\xi,\eta)|^2\le c\|\xi-\eta\|_\8^2\e^{-\aa t},
\end{equation}
where $\aa:=2v_0-cL>0$ since $L>0$ is sufficiently small. By the
H\"older inequality and the Burkhold-Davis-Gundy inequality (see,
e.g., \cite[Theorem 7.3, p.40]{M08}, we obtain from \eqref{a5},
\eqref{a6} and \eqref{b4} that
\begin{equation}\label{b5}
\begin{split}
\E\|\Xi_t(\xi,\eta)\|_\8^2&\le
3\E|\Xi(t-\tau;\xi,\eta)|^2+3\tau\E\int_{t-\tau}^t\Big|\int_{-\tau}^0\Xi(s+u;\xi,\eta)\mu(\d
u)\Big|^2\d s\\
&\quad+3\E\Big(\sup_{-\tau\le\theta\le0}\Big|\int_{t-\tau}^{t+\theta}(\si(X_s(\xi))-\si(X_s(\eta)))\d
W(s)\Big|^2\Big)\\
&\le c  \|\xi-\eta\|_\8^2\e^{-\aa t}.
\end{split}
\end{equation}
Hence  ($\N1$) holds. In
what follows, we show that ($\N2$) is also
valid. Applying  \eqref{a6} and \eqref{a7}, and utilizing the It\^o
isometry yield that
\begin{equation*}
\begin{split}
\E|X(t;\xi)|^2&\le3\Big\{|r(t)\xi(0)|^2+\Big|\int_{-\tau}^0\int_\theta^0
r(t+\theta-s)\xi(s)\d s\mu(\d
\theta)\Big|^2\\
&\quad+2\E\int_0^t|r(t-s)(\si(X_s(\xi))-\si(0))|^2\d
s+2\E\int_0^t|r(t-s)\si(0)|^2\d s\Big\}\\
&\le c\Big\{\e^{-2\gg
t}\|\xi\|^2_\8+\int_0^t\e^{-2\gg(t-s)}|\si(0)|^2\d
s\Big\}\\
&\quad+c_0L\E\int_0^t\e^{-2\gg(t-s)}\Big(|X(s;\xi)|^2+\int_{-\tau}^0|X(s+\theta;\xi)|^2\rr(\d\theta)\Big)|^2\d
s\\
&\le c+2c_0 L\E\int_0^t\e^{-2\gg(t-s)}|X(s;\xi)|^2\d s,
\end{split}
\end{equation*}
where $c_0>0$ is some constant. By Lemma \ref{Inq2}, we arrive at
\begin{equation}\label{a3}
\sup_{t\ge0}\E|X(t;\xi)|^2\le c
\end{equation}
since $L>0$ is sufficiently small. Following a similar argument to
that of \eqref{b5} and taking \eqref{a3} into account, one has
\begin{equation}\label{A2}
\sup_{t\ge0}\E\|X_t(\xi)\|_\8^2\le c
\end{equation}
and therefore ($\N2$) holds. \eqref{A2}, in addition to \eqref{b5},
implies that
\begin{equation}\label{b6}
\E\|X_t(\xi)\|_\8^2\le2\E\|X_t(\xi)-X_t(0)\|_\8^2+\E\|X_t(0)\|_\8^2\le
c(1+\e^{-\aa t}\|\xi\|_\8^2).
\end{equation}
Employing the invariance of $\pi\in\mathcal {P}(\C)$ and integrating
with respect to $\pi(\cdot)\in\mathcal {P}(\C)$ on both sides of
\eqref{b6} lead to
\begin{equation}\label{eq16}
\pi(\|\cdot\|_\8)<\8.
\end{equation}
If $\pi^\prime(\cdot)\in\mathcal {P}(\C)$ is also a stationary
distribution of \eqref{a5}, for any bounded Lipschitz function
$f:\C\mapsto\R$, by \eqref{b5} and the invariance of
$\pi(\cdot),\pi^\prime(\cdot)\in\mathcal {P}(\C)$, it follows from
\eqref{eq16} that
\begin{equation}\label{eq18}
|\pi(f)-\pi^\prime(f)|\le\int_{\C\times\C}| P_tf(\xi)-
P_tf(\eta)|\pi(\d\xi)\pi^\prime(\d\eta)\le c\e^{-\ll t},\ \ \ \
t\ge0.
\end{equation}
This implies the uniqueness of stationary distribution by taking
$t\to\8$ in \eqref{eq18}.
\end{proof}

\begin{rem}
{\rm By the invariance of $\pi\in\mathcal {P}(\C)$, for any
$F\in\B_b(\C)$,
\begin{equation}\label{eq17}
 |P_tF(\xi)-\pi(F)|\le\int_\C| P_tF(\xi)-P_tF(\eta)|\pi(\d\eta).
\end{equation}
Thus,  taking \eqref{b5},  \eqref{eq16}, \eqref{eq17}, and
\cite[Lemma 7.1.5, p.125]{DZ} into consideration yields that
\begin{equation*}
|P_tF(\xi)-\pi(F)|\le c\e^{-\aa t}
\end{equation*}
for arbitrary $t\ge0,\ F\in\B_b(\C),\ \xi\in\C$ and some  $\aa>0.$
That is, $\{P_t\}_{t\ge0}$ converges exponentially to the
equilibrium uniformly with respect to $\xi$ in each ball with
finite radius in $\C$. }
\end{rem}

\begin{rem}
{\rm Under dissipative conditions, by the  Arzel\`{a}--Ascoli
tightness characterization, Es-Sarhir et al. \cite{ESV} and Kinnally
and Williams \cite{KW} exploited existence of stationary
distributions of retarded SDEs with super-linear drift terms and
positivity constraints, respectively.  Applying the It\^o formula,
they gave the uniform boundedness, which plays a key role in
analyzing the diffusion terms by the Kolmogrov tightness criterion,
for higher moments of the segment processes. However, \eqref{a5}
need not satisfy any dissipative conditions, and therefore the
tricks adopted in Es-Sarhir et al. \cite{ESV} and Kinnally and
Williams \cite{KW} no longer work. In this section, adopting the
variation-of-constants formula and utilizing the
stability-in-distribution technique, we provide verifiable criterion
to capture a unique stationary distribution for a class of
semi-linear retarded SDEs, where, in particular, the characteristic
equation of the corresponding deterministic counterpart play a key
role.

 }
\end{rem}

\begin{rem}\label{remark}
{\rm Under dissipative conditions, It\^o and Nisio \cite{IN}
discussed existence of stationary solutions for retarded SDEs. By
\cite[Theorem 3]{IN}, we deduce from \eqref{b6} that \eqref{a5},
without satisfying a dissipative condition, has a stationary
solution. A solution $\{X(t)\}_{t\ge-\tau}$ of \eqref{a5} is called
strong stationary, or simply stationary, if the finite-dimensional
distributions are invariant under time translation, i.e.,
\begin{equation*}
\P\{X(t+t_k)\in\GG_k,\ k=1,\ldots,n\}=\P\{X(t_k)\in\GG_k,\
k=1,\ldots,n\}
\end{equation*}
for all $t\ge0,t_k\ge-\tau$ and $\GG_k\in\B(\R)$. For stationary
solutions of retarded O-U processes in Hilbert spaces, we refer to,
e.g.,  Liu \cite{L10}. It is worth   pointing out that the stationary
solutions discussed for example, in  It\^o and Nisio \cite{IN} and Liu
\cite{L10} are related to the solution process
$\{X(t)\}_{t\ge-\tau}$. Nevertheless, the stationary distributions
in our case
involve  the segment process $\{X_t\}_{t\ge0}$. }
\end{rem}

Before concluding this section,  we give an example
to show the validity
of Theorem \ref{diffusion}. Note that the desired results in the following
example
cannot be
obtained by any of the results in Bao et al. \cite{BY},  Es-Sarhir et al.
\cite{ESV}, and Kinnally and Williams \cite{KW}.

\begin{exa}
{\rm  Consider a semi-linear retarded SDE
\begin{equation}\label{A3}
\d X(t)=-X(t-1)\d t+\si(X(t-1))\d W(t),\ \ \ X_0=\xi\in\C,
\end{equation}
It is readily
seen that the corresponding   characteristic equation is
\begin{equation}\label{a11}
\ll+\e^{-\ll}=0.
\end{equation}
A simple calculation
using Matlab yields that the unique root of \eqref{a11} is
$$\ll=-0.3181 + 1.3372\mbox{i}.$$ Thus, by Theorem \ref{diffusion},
we deduce that  \eqref{A3} possesses a unique stationary
distribution $\pi\in\mathcal {P}(\C)$ whenever the Lipschitz
constant of $\si:\R\to\R$ is sufficiently small. Hence, \eqref{A1}
also has a unique stationary distribution $\pi\in\mathcal {P}(\C)$
for a sufficiently small Lipschitz constant. Note that \eqref{A3}
  does not satisfy a dissipative condition even
the Lipschitz constant of $\si$ is sufficiently small since it is
impossible to choose constants $\ll_1>\ll_2>0$ such that
\begin{equation*}
2xy+\si^2(y)\le c-\ll_1|x|^2+\ll_2|y|^2,\ \ \ x,y\in\R.
\end{equation*}
Therefore, \eqref{A3}  cannot be covered by Bao et al. \cite[Theorem
3.2]{BY}, Es-Sarhir et al. \cite{ESV} and Kinnally and Williams
\cite{KW}. }
\end{exa}

The following example shows that, in some cases,  the
variation-of-constants technique and the dissipative method adopted
in, e.g., Bao et al. \cite[Theorem 3.2]{BY} play the same role in
the exploration of existence and uniqueness of stationary
distributions.

\begin{exa}\label{example}
{\rm Consider a semi-linear retarded SDE
\begin{equation}\label{a9}
\d X(t)=\{a X(t)+bX(t-1)\}\d t+\si(X(t-1))\d W(t),\ \ \
X_0=\xi\in\C,
\end{equation}
where $a<0,b\in\R$ and $\si:\R\to\R$ is Lipschitzian with a
Lipschitz constant sufficiently small. For this case, it is trivial
to see that $\mu(\cdot)=a \dd_0(\cdot)+b\dd_{-1}(\cdot),$ where
$\dd$ is the Dirac measure. Note that the characteristic equation
corresponding to the deterministic counterpart of \eqref{a9} is
\begin{equation}\label{a10}
\ll-a-b\e^{-\ll}=0.
\end{equation}
By \cite[Theorem 1]{LT}, all the roots of \eqref{a10} have negative
real parts if and only if
\begin{equation}\label{eq21}
a<b<-a.
\end{equation}
Then, by Theorem \ref{diffusion}, \eqref{a9} admits a unique
stationary distribution $\pi\in\mathcal {P}(\C)$ provided that the
Lipschitz constant of $\si$ is sufficiently small. Furthermore, for
any $x,y\in\R$, by the elemental inequality: $2uv\le\vv
u^2+\vv^{-1}v^2,u,v\in\R,\vv>0$, we obtain
\begin{equation}\label{eq20}
2x(ax+by)=2ax^2+2bxy\le-(-2a-\vv)x^2+\ff{b^2}{\vv}y^2,\ \ \ \vv>0.
\end{equation}
In particular, taking $\vv=|b|$ in \eqref{eq20} leads to
\begin{equation*}
2x(ax+by)=2ax^2+2bxy\le-(-2a-|b|)x^2+|b|y^2.
\end{equation*}
If $-2a-|b|>|b|$, i.e., \eqref{eq21} holds, then \eqref{a9}
satisfies a dissipative condition whenever the Lipschitz constant of
$\si$ is sufficiently small. Consequently, \cite[Theorem 3.2]{BY}
also yields that  \eqref{a9} has a unique stationary distribution. }
\end{exa}

\section{Stationary Distributions for SDEs of Neutral Type}\label{Sec3}
In this section,  we proceed to generalize Theorem \ref{diffusion}
to SDEs of neutral type. To begin,
we give an overview of the
variation-of-constants formula for linear equations of neutral type.
By \cite[Theorem 1.1, p.256]{HL}, the following linear equation of
neutral type
\begin{equation}\label{N2}
\d
\Big(Y(t)-\int_{-\tau}^0Y(t+\theta)\rr(\d\theta)\Big)=\Big(\int_{-\tau}^0Y(t+\theta)\mu(\d\theta)\Big)\d
t
\end{equation}
with the initial
data $\xi\in\C$  has a unique solution
$\{Y(t;\xi)\}_{t\ge-\tau}$.   By virtue of \cite[Theorem 2.2]{L},
for any $\xi\in\C$ such that
$\int_{-\tau}^0|\xi^\prime(\theta)|^2\d\theta<\8$, $Y(t;\xi)$ can be
expressed explicitly by
\begin{equation*}
\begin{split}
Y(t;\xi)
=r(t)\xi(0) &-\int_{-\tau}^0 r(t+\theta)\xi(0)\rr(\d
\theta)+\int_{-\tau}^0\int_s^0r(t+\theta-s)\xi(s)\d s\mu(\d
\theta)\\
&
+\int_{-\tau}^0\int_s^0 r(t-s+\theta)\xi^\prime(s)\d s\rr(\d
\theta),
\end{split}
\end{equation*}
 where $r(t)$ is the fundamental solution of \eqref{N2} with the
initial segment $r(0)=1$ and $r(\theta)=0,\theta\in[-\tau,0).$ Let
\begin{equation*}
\bar v_0:=\sup\{\mbox{Re}(\ll):\ll\in \mathbb{C},\ \3_0(\ll)=0\},
\end{equation*}
where
\begin{equation*}
\3_0(\ll):=\ll-\ll\int_{-\tau}^0\e^{\ll\theta}\rr(\d\theta)-\int_{-\tau}^0\e^{\ll\theta}\mu(\d\theta),\
\ \ \ll\in\mathbb{C}.
\end{equation*}
In view of \cite[Theorem 3.2, p.271]{HL}, one has
\begin{equation}\label{N3}
|G(t)|\le c\e^{\aa t}\ \ \ \mbox{ for any } \aa>\bar v_0.
\end{equation}
For more details on the variation-of-constants formula  of equations
of neutral type, we refer the reader to \cite[Chapeter 9]{HL}.

\smallskip

In this section, we consider a semi-linear SDE of neutral type  in
the form
\begin{equation}\label{N1}
\d
\Big(X(t)-\int_{-\tau}^0X(t+\theta)\rr(\d\theta)\Big)=\Big(\int_{-\tau}^0X(t+\theta)\mu(\d\theta)\Big)\d
t+\si(X_t)\d W(t)
\end{equation}
with the initial value $X_0=\xi\in\C$, where
 $\si(\cdot):\C\to\R$ such that \eqref{a6} and $\{W(t)\}_{t\ge0}$ is a real-valued
Brownian motion  defined on the probability space
$(\OO,\P,\F,\{\F_t\}_{t\ge0})$.

\smallskip

Our main result in this section is presented as follows.

\begin{thm}\label{neutral}
{\rm Let $\bar v_0<0,\kk:=\mbox{Var}(\rr)<1/2$ and assume further
that \eqref{a6} holds for a sufficiently small $L>0$. Then
\eqref{N1} has a unique stationary distribution $\pi\in\mathcal
{P}(\C)$. }
\end{thm}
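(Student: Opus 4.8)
The plan is to mirror the structure of the proof of Theorem~\ref{diffusion}, using the stability-in-distribution criterion of \cite[Theorem~3.2]{YZM}, so that the task reduces to verifying conditions $(\N1)$ and $(\N2)$ for the neutral equation \eqref{N1}. The essential new ingredient is the variation-of-constants representation for the neutral case recalled above: writing $\{r(t)\}_{t\ge-\tau}$ for the fundamental solution of \eqref{N2} and using the explicit formula together with \cite[Theorem~2.2]{L} (or, for general $\xi\in\C$, an approximation argument), the strong solution of \eqref{N1} admits a representation of the form
\begin{equation*}
X(t;\xi)=\Gamma(t;\xi)+\int_{-\tau}^0\Big(\int_0^{t}r(t-s+\theta)\si(X_s(\xi))\,\d W(s)\Big)\rr(\d\theta)+\int_0^t r(t-s)\si(X_s(\xi))\,\d W(s),
\end{equation*}
where $\Gamma(t;\xi)$ collects the deterministic terms driven by the initial data and, by \eqref{N3}, satisfies $|\Gamma(t;\xi)|\le c\e^{\aa t}\|\xi\|_\8$ for any fixed $\aa\in(\bar v_0,0)$.

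First I would establish the analogue of \eqref{b4}: for $\Xi(t;\xi,\eta):=X(t;\xi)-X(t;\eta)$, take second moments in the representation above, apply the It\^o isometry to the two stochastic integrals, use \eqref{a6} to bound $|\si(X_s(\xi))-\si(X_s(\eta))|^2$, and use \eqref{N3} to control the kernels $r(t-s)$ and $r(t-s+\theta)$ by $c\e^{\aa(t-s)}$. The only genuinely new term is $\int_{-\tau}^0(\int_0^t r(t-s+\theta)(\cdots)\,\d W(s))\rr(\d\theta)$; here I would first apply the Cauchy--Schwarz inequality in the finite signed measure $\rr$, picking up a factor $\kk=\mathrm{Var}(\rr)$, before taking expectations and using the It\^o isometry, so that this term contributes $c\,\kk\,L\int_0^t\e^{2\aa(t-s)}(\cdots)\,\d s$ after absorbing the segment norm as in \eqref{a2}. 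Multiplying by $\e^{-2\aa t}$ and invoking the Gronwall inequality then yields $\E|\Xi(t;\xi,\eta)|^2\le c\|\xi-\eta\|_\8^2\e^{-\lambda t}$ for some $\lambda>0$, provided $L$ is small; note that the $\kk<1/2$ hypothesis enters only to keep the neutral "feedback" coefficient strictly less than one so that the constants remain finite and the sign of the exponent is governed by $\bar v_0<0$. For $(\N2)$ I would run the same computation with $\eta\equiv0$, splitting $\si(X_s(\xi))=\bigl(\si(X_s(\xi))-\si(0)\bigr)+\si(0)$ as in the proof of Theorem~\ref{diffusion}, and apply Lemma~\ref{Inq2} to obtain $\sup_{t\ge0}\E|X(t;\xi)|^2\le c$.

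To pass from pointwise second-moment bounds to segment-norm bounds, i.e. to deduce $\E\|\Xi_t(\xi,\eta)\|_\8^2\le c\|\xi-\eta\|_\8^2\e^{-\lambda t}$ and $\sup_{t\ge0}\E\|X_t(\xi)\|_\8^2\le c$, I would argue as in \eqref{b5}: write $X(t+\theta)-\int_{-\tau}^0 X(t+\theta+u)\rr(\d u)$ via the stochastic differential \eqref{N1}, use the H\"older and Burkholder--Davis--Gundy inequalities on the diffusion part over $[t-\tau,t]$, use $\kk<1/2$ to move the neutral term $\E\|X_t\|_\8^2$ to the left-hand side with a coefficient $1-2\kk^2>0$ (or $1-\kk$, depending on how the split is organized), and feed in the pointwise estimates already obtained. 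With $(\N1)$ and $(\N2)$ in hand, the existence of a stationary distribution $\pi\in\mathcal{P}(\C)$ follows exactly as in Theorem~\ref{diffusion} via the Markov and weak-convergence argument; uniqueness follows by first deriving $\pi(\|\cdot\|_\8)<\8$ from the analogue of \eqref{b6} and the invariance of $\pi$, and then estimating $|\pi(f)-\pi'(f)|\le c\e^{-\lambda t}\to0$ for bounded Lipschitz $f$.

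The main obstacle I anticipate is the bookkeeping around the neutral term: one must check that the constants produced when estimating $\int_{-\tau}^0 r(t-s+\theta)\,\rr(\d\theta)$-type contributions really are controlled by $\kk=\mathrm{Var}(\rr)$ and that the threshold $\kk<1/2$ is exactly what is needed to close both the Gronwall estimate (keeping $\lambda>0$) and the segment-norm estimate (keeping the left-hand coefficient positive), uniformly in the small parameter $L$. A secondary technical point is justifying the explicit variation-of-constants representation for general $\xi\in\C$ rather than only for $\xi$ with $\int_{-\tau}^0|\xi'(\theta)|^2\,\d\theta<\8$, which I would handle by a density/approximation argument together with continuous dependence of the solution of \eqref{N1} on the initial data.
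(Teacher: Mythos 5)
Your overall strategy coincides with the paper's: reduce to the two stability-in-distribution conditions, use the neutral variation-of-constants representation (for $C_b^2$ initial data, then approximate), obtain pointwise second-moment estimates by It\^o isometry and Gronwall, and then upgrade to segment-norm estimates. Two points, however, need attention. First, a minor one: your representation contains an extra stochastic term $\int_{-\tau}^0\bigl(\int_0^t r(t-s+\theta)\si(X_s)\,\d W(s)\bigr)\rr(\d\theta)$ that is not present in the formula the paper uses (which, following \cite[Theorem 3.1]{RR}, has the single convolution $\int_0^t r(t-s)\si(X_s)\,\d W(s)$ with $r$ the fundamental solution of the \emph{neutral} equation \eqref{N2}); the forced response of a neutral equation is still a single convolution against the fundamental solution, so this term should not be there, although its presence would not wreck the estimates.

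The substantive gap is in the passage from the pointwise bound to the segment-norm bound. When you write $X(s)-\int_{-\tau}^0X(s+u)\rr(\d u)$ for $s\in[t-\tau,t]$ and try to recover $\E\|X_t\|_\8^2$, the neutral convolution involves $X$ on all of $[t-2\tau,t]$, so after Cauchy--Schwarz in $\rr$ you get a bound of the schematic form $(1-\kk)\,\E\|X_t\|_\8^2\le \kk\,\E\|X_{t-\tau}\|_\8^2+c\e^{-\aa t}$. Moving the $\E\|X_t\|_\8^2$ term to the left does not close the estimate: the previous \emph{segment} norm $\E\|X_{t-\tau}\|_\8^2$ remains on the right, and it is not controlled by the pointwise bounds $\E|X(s)|^2$. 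One must iterate this recursion over the intervals $[n\tau,(n+1)\tau]$ and sum the resulting geometric series; this is where $\kk<1/2$ is genuinely used, namely to make the contraction ratio $\kk/(1-\kk)<1$ (positivity of the left-hand coefficient would only require $\kk<1$). This is exactly how the paper proceeds, via the induction leading to \eqref{q1}--\eqref{q2}, and it also shows that the final exponential rate is degraded to $(p\wedge1)\gg$ with $p=\ff{1}{\gg\tau}\log\ff{1-\kk}{\kk}$, rather than being governed solely by $\bar v_0$ as you assert. Your plan as written ("absorb into the left-hand side and feed in the pointwise estimates") would not produce the decay of $\E\|X_t(\xi)-X_t(\eta)\|_\8^2$ without this iteration.
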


\begin{proof}
By a close inspection of the argument of  Theorem \ref{diffusion},
for a bounded subset $U\subset\C$, it is sufficient to show that
\begin{equation}\label{N7}
\E\|X(t;\xi)-X(t;\eta)\|^2_\8\le c\e^{-\aa t},\ \ \ \xi,\eta\in U
\end{equation}
for some $\aa>0$, and
\begin{equation}\label{N8}
\sup_{t\ge0}\E\|X(t;\xi)\|^2_\8<\8,\ \ \ \xi\in U.
\end{equation}
In what follows, we assume that $\xi,\eta\in C^2_b(\C)$ without any
confusions. By the variation-of-constants formula \cite[Theorem
3.1]{RR}, \eqref{N1} can be written as
\begin{equation}\label{N4}
\begin{split}
X(t;\xi)=r(t)\xi(0) &-\int_{-\tau}^0 r(t+\theta)\xi(0)\rr(\d
\theta)+\int_{-\tau}^0\int_s^0r(t+\theta-s)\xi(s)\d s\mu(\d
\theta)\\
& +\int_{-\tau}^0\int_s^0 r(t-s+\theta)\xi^\prime(s)\d s\rr(\d
\theta)+\int_0^tr(t-s)\si(X_s(\xi))\d W(s).
\end{split}
\end{equation}
Carrying out arguments   analogous to that of \eqref{a2} and
\eqref{a3} respectively, for any $\xi,\eta\in C^2_b(\C)$, we derive
from \eqref{N4} that
\begin{equation}\label{N5}
\E|X(t;\xi)-X(t;\eta)|^2\le c\e^{-\aa t},\ \ \ t\ge0
\end{equation}
for some $\aa>0$, and
\begin{equation}\label{N6}
\sup_{t\ge0}\E|X(t;\xi)|^2<\8
\end{equation}
whenever $L>0$ is sufficiently small. In
terms of \eqref{N5} and the Burkhold-Davis-Gundy inequality (see,
e.g., \cite[Theorem 7.3, p.40]{M08}), one obtains from \eqref{a6}
and \eqref{N1} that
\begin{equation*}
\E\Big(\sup_{t-\tau\le s\le
t}\Big|X(s;\xi)-X(s;\eta)-\int_{-\tau}^0(X(s+\theta;\xi)-X(s+\theta;\eta))\rr(\d\theta)\Big|^2\Big)\le
c\e^{-\aa t},\ \ \ t\ge0.
\end{equation*}
By the
elementary inequality:
\begin{equation*}
(a+b)^2\le a^2/(1-\vv)+b^2/\vv,\ \ \ a,b\in\R,\ \vv\in(0,1),
\end{equation*}
for any integer $n\ge1$, it thus follows that
\begin{equation*}
\begin{split}
&\E\|X_{n\tau}(\xi)-X_{n\tau}(\eta)\|_\8^2\\
&\le\ff{1}{\kk}\E\Big(\sup_{(n-1)\tau\le s\le
n\tau}\Big|\int_{-\tau}^0(X(s+\theta;\xi)-X(s+\theta;\eta))\rr(\d\theta)\Big|^2\Big)+\ff{c\e^{-n\aa
\tau}}{1-\kk}\\
&\le\kk\E\|X_{n\tau}(\xi)-X_{n\tau}(\eta)\|^2_\8+\kk\E\|X_{(n-1)\tau}(\xi)-X_{(n-1)\tau}(\eta)\|^2_\8+\ff{c\e^{-n\gg
\tau}}{1-\kk}.
\end{split}
\end{equation*}
That is, one has
\begin{equation*}
\E\|X_{n\tau}(\xi)-X_{n\tau}(\eta)\|_\8^2\le
\ff{\kk}{1-\kk}\E\|X_{(n-1)\tau}(\xi)-X_{(n-1)\tau}(\eta)\|^2_\8+\ff{c\e^{-n\gg
\tau}}{(1-\kk)^2}.
\end{equation*}
By an induction argument, we obtain that
\begin{equation}\label{q1}
\begin{split}
&\!\!\!\! \E\|X_{n\tau}(\xi)-X_{n\tau}(\eta)\|^2_\8\\
&\ \le
c\Big(\ff{\kk}{1-\kk}\Big)^n+\ff{c}{(1-\kk^2)}\Big\{\Big(\ff{\kk}{1-\kk}\Big)^{n-1}\e^{-\gg
\tau}
+\Big(\ff{\kk}{1-\kk}\Big)^{n-2}\e^{-2\gg
\tau}+\cdots+\e^{-n\gg \tau}\Big\}\\
&\ \le c\Big(\ff{\kk}{1-\kk}\Big)^n+\ff{\e^{-n\gg\tau}(1-q^n)}{1-q}\\
&\ \le c\e^{-pn\gg\tau}+\ff{\e^{-n\gg\tau}}{1-q}\\
&\ \le c\e^{-(p\wedge1)n\gg\tau},
\end{split}
\end{equation}
where
\begin{equation*}
p:=\ff{1}{\gg\tau}\log\Big(\ff{1-\kk}{\kk}\Big) \mbox{ and }
q:=\kk\e^{\aa\tau}/(1-\kk)<1
\end{equation*}
because $\kk<1/2$ and
$\aa$ can be taken sufficiently
small. Next,  for any $t>0$, note that there exists an $n\ge0$ such
that $t\in[n\tau,(n+1)\tau)$ and by \eqref{q1} that
\begin{equation}\label{q2}
\begin{split}
&\!\!\! \E\|X_t(\xi)-X_t(\eta)\|^2_\8 \\
&\ \le\E\|X_{n+1}(\xi)-X_{n+1}(\eta)\|^2_\8+\E\|X_n(\xi)-X_n(\eta)\|^2_\8\\
&\ \le
c\e^{-(p\wedge1)(n+1)\gg\tau}+c\e^{(p\wedge1)\gg\tau}\e^{-(p\wedge1)(n+1)\gg\tau}\\
&\ \le c\e^{-(p\wedge1)\gg t}.
\end{split}
\end{equation}
Recall that each bounded and continuous function on $\C$ may be
approximated pointwise by functions of $C_b^2(\C)$. By the
Burkhold-Davis-Gundy inequality and the Gronwall inequality, there
exists $\xi_n\in C_b^2(\C)$ such that
\begin{equation}\label{eq22}
\lim_{n\to\8}\E\|X_t(\xi)-X_t(\xi_n)\|^2_\8=0,\ \ \ t\ge0.
\end{equation}
For any $\xi,\eta\in U$, note that
\begin{equation}\label{eq23}
\begin{split}
\E\|X_t(\xi)-X_t(\eta)\|^2_\8&\le
2\E\|X_t(\xi_n)-X_t(\eta_n)\|^2_\8+4\E\|X_t(\xi)-X_t(\xi_n)\|^2_\8\\
&\quad+4\E\|X_t(\eta)-X_t(\eta_n)\|^2_\8,
\end{split}
\end{equation}
where $\eta_n\in C_b^2(\C)$ such that \eqref{eq22} with $\xi$ and $\xi_n$
replaced by $\eta$ and $\eta_n$, respectively. As a result, we conclude
 \eqref{N7} follows from \eqref{q2}, \eqref{eq22}, and \eqref{eq23}. Analogously, \eqref{N8} can be proved.
\end{proof}

\begin{rem}
{\rm Under   dissipative conditions, \cite[Theorem 4.2]{BY}
discusses existence of  stationary distributions for a class of
neutral SDEs. However, in this section, by the
variation-of-constants formula, we investigate existence and
uniqueness of stationary distributions for a range of semi-linear
SDEs of neutral type, which might not satisfy dissipative
conditions, see, e.g., Example \ref{nexa} below. }
\end{rem}

Finally,  we construct an example to demonstrate the theory
established in Theorem \ref{neutral}.

\begin{exa}\label{nexa}
{\rm Consider a linear neutral SDE
\begin{equation}\label{q3}
\d \Big(X(t)+\ff{1}{3}X(t-1)\Big)=-X(t-1)\d
t+a\int_{-1}^0X(t+\theta)\d\theta\d W(t),\ \ \ X_0=\xi,
\end{equation}
where $a\in\R$ and $\{W(t)\}_{t\ge0}$ is a real-valued Brownian
motion  defined on the probability space
$(\OO,\P,\F,\{\F_t\}_{t\ge0})$. The characteristic equation
associated with the deterministic counterpart of \eqref{q3} is
\begin{equation}\label{q4}
\ll+\Big(1+\ff{\ll}{3}\Big)\e^{-\ll}=0,\ \ \ \ll\in\mathbb{C}.
\end{equation}
A calculation by the MatLab shows that the unique root of \eqref{q4}
is $\ll=-2.313474269.$ Then, by Theorem \ref{neutral} we deduce that
\eqref{q3} has a unique stationary distribution if $a\in\R$ is
sufficiently small. }
\end{exa}

\section{Stationary Distributions for Retarded SDEs Driven by
Jump Processes}\label{Sec4} In the last two sections, we studied
existence and uniqueness of stationary distributions for retarded
SDEs with continuous sample paths. In this section, we turn to the
case of retarded SDEs driven by jump processes.
One distinct feature of the jump processes is
that
there may be no finite second moments.
As a result, there is no It\^o isometry that can be used.
The lack of the second moments and the jump discontinuity make
the problem more difficult to deal with.
Although the variation-of-constants approach can still be used,
the verification of the
tightness cannot be done as in the last two sections.
To overcome the difficulty, we use the Kurtz tightness criterion
to treat the underlying problem.

\smallskip
 Consider a retarded O-U process driven by a L\'{e}vy
process with the L\'{e}vy triple $(0,0,\nu)$  in the form
\begin{equation}\label{eq1}
\d X(t)=\Big(\int_{[-\tau,0]}X(t+\theta)\mu(\d\theta)\Big)\d t+\d
Z(t),\ \ \ X_0=\xi\in\D.
\end{equation}
The main result in this section is as follows.

\begin{thm}\label{jump}
{\rm Let $v_0<0$ and assume further that
\begin{equation}\label{c1}
\int_{|z|>1}|z|\nu(\d z)<\8.
\end{equation} Then  there is a unique stationary distribution $\pi\in\mathcal {P}(\D)$ for \eqref{eq1}. }
\end{thm}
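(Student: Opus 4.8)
\textbf{Proof strategy for Theorem \ref{jump}.}

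The plan is to mimic the stability-in-distribution scheme of Theorem \ref{diffusion}, but replacing the second-moment bounds (which are unavailable here because of \eqref{c1}) by first-moment bounds for the ``difference'' process and by the Kurtz tightness criterion (Lemma \ref{tightness}) for the segment laws. First I would write down the variation-of-constants representation of the solution of \eqref{eq1}, namely
\begin{equation*}
X(t;\xi)=r(t)\xi(0)+\int_{-\tau}^0\int_\theta^0 r(t+\theta-s)\xi(s)\,\d s\,\mu(\d\theta)+\int_0^t r(t-s)\,\d Z(s),
\end{equation*}
valid since $Z$ has no drift and no Brownian part. The first two terms are deterministic, continuous in $t$, and decay like $\e^{\gg t}$ with $\gg\in(v_0,0)$ by \eqref{eq3}; they contribute nothing to the long-run law. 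For two initial data $\xi,\eta\in\D$ the stochastic integral cancels, so $X(t;\xi)-X(t;\eta)$ is purely deterministic and
\begin{equation*}
\|X_t(\xi)-X_t(\eta)\|_\8\le c\,\e^{-\aa t}\,\d_S(\xi,\eta)
\end{equation*}
for some $\aa>0$; this is the analogue of \eqref{b5} and gives the contraction needed both for uniqueness and for the Cauchy property of the laws.

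Next I would establish tightness of $\{\mathcal L(X_t(\xi))\}_{t\ge0}$ in $\mathcal P(\D)$, which is the step I expect to be the main obstacle. Write $M(t):=\int_0^t r(t-s)\,\d Z(s)$; one cannot use an $L^2$ bound, so instead I would decompose $Z$ via the L\'evy--It\^o decomposition into a compensated small-jumps part (which is an $L^1$, even $L^2$, martingale) and a compound-Poisson-type large-jumps part whose jump rate is finite and whose jumps are integrable by \eqref{c1}. For each $t\ge0$ set $Y^t(s):=X(t-\tau+s;\xi)$, $s\in[0,\tau]$, and verify the two hypotheses of Lemma \ref{tightness}: the uniform boundedness in probability of $\sup_{0\le s\le T}|Y^t(s)|$, obtained from the explicit representation together with the $\e^{\gg t}$ decay of $r$, the stationarity-type a priori bound $\sup_{t\ge0}\E|M(t)|<\8$ (here \eqref{c1} is exactly what is needed), and Markov's inequality; and the conditional increment estimate $\E^t_v\{|Y^t(u+v)-Y^t(u)|\wedge1\}\le\E^t_v\gg(t,\dd)$ with $\limsup_{t\to\8}\E\gg(t,\dd)\to0$ as $\dd\to0$, which follows by splitting the increment of $M$ over $[u+v,u]$ into the contribution of $r(t-\cdot)$ varying (controlled by $\int$ of $|r'|$, again exponentially small and Lipschitz in $\dd$) and the contribution of the driving noise over a short window (controlled in $L^1$ by $\dd\cdot(\text{const})$ using integrability of the jumps). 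Together these give tightness of the segment laws.

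Finally I would combine the pieces: by tightness there is a weakly convergent subsequence of $\mathcal L(X_{t_k}(\xi))$ with some limit $\pi\in\mathcal P(\D)$; the contraction estimate shows the limit does not depend on the subsequence nor on $\xi$, so $\mathcal L(X_t(\xi))\Rightarrow\pi$ as $t\to\8$; then the Feller property of the segment semigroup on $\D$ (which holds because $X_t(\xi)$ depends continuously on $\xi$ in the Skorohod topology, by the representation above) together with the semigroup identity $P_{t+s}F=P_sP_tF$ yields $\pi(F)=\pi(P_tF)$ for all $F\in\B_b(\D)$ on letting $s\to\8$, i.e.\ $\pi$ is stationary. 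Uniqueness is immediate from the same exponential contraction: if $\pi'$ is another stationary law, then $|\pi(f)-\pi'(f)|\le\int_{\D\times\D}|P_tf(\xi)-P_tf(\eta)|\,\pi(\d\xi)\pi'(\d\eta)\le c\,\e^{-\aa t}\int\!\int\d_S(\xi,\eta)\,\pi(\d\xi)\pi'(\d\eta)$ for bounded Lipschitz $f$, which $\to0$ as $t\to\8$ once one checks $\int\d_S(\cdot,0)\,\d\pi<\8$ (a first-moment bound for $\pi$, inherited from $\sup_{t\ge0}\E\,\d_S(X_t(0),0)<\8$ by stationarity). The delicate point throughout is that every estimate must be done at the level of first moments or in probability, never in $L^2$, and that the Skorohod-space increments in Lemma \ref{tightness} must be handled with care near the jump times of $Z$.
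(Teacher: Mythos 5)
Your proposal follows essentially the same route as the paper's proof: the variation-of-constants representation, first-moment bounds on the stochastic convolution obtained from the L\'evy--It\^o decomposition together with \eqref{c1}, Kurtz's criterion (Lemma \ref{tightness}) for tightness of the segment laws, and the observation that the difference of two solutions is deterministic and decays exponentially, which gives uniqueness. The only divergences are minor: the paper concludes existence via the Krylov--Bogoliubov theorem applied to Ces\`aro averages of $\P(t,\xi,\cdot)$ together with the eventual Feller property cited from \cite{RR}, rather than by direct weak convergence of $\mathcal{L}(X_t(\xi))$, and its contraction estimate \eqref{eq13} carries $\|\xi-\eta\|_\8$ rather than $\d_S(\xi,\eta)$ on the right-hand side (the former is what the explicit formula actually yields, and it suffices for the uniqueness argument since the required moment bound on $\pi$ holds for $\|\cdot\|_\8$).
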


\begin{proof}
 For each
integer $n\ge1$, set
\begin{equation*}
\mu_n(\cdot):=\ff{1}{n}\int_0^n\P(t,\xi,\cdot)\d t.
\end{equation*}
If $\{\mathcal {L}(X_t(\xi))\}_{t\ge\tau}$ is tight under the
Skorohod metric $\d_S$, for any $\vv>0$ there exists a compact
subset $U\in\B(\D)$ such that $$\P(X_t(\xi)\in U)\le1-\vv
\ \hbox{ and hence }\
 \mu_n(U)\le1-\vv.$$ Thus  $\{\mu_n(\cdot)\}_{n\ge1}$ is tight.
Recall  that $\{X_t(\xi)\}_{t\ge0}$ is Markovian by
\cite[Proposition 3.3]{RR} and eventually Feller, i.e., for all
$t\ge\tau$, $P_t$ maps $C_b(\D)$ into itself  due to
\cite[Proposition 3.5]{RR}. As a consequence, by the
Krylov-Bogoliubov theorem \cite[Theorem 3.1.1, p.21]{DZ}, we
conclude that \eqref{eq1} has a stationary distribution
$\pi\in\mathcal {P}(\D)$.

For $t>0$ and $\GG\in\mathscr{B}(\R\setminus\{0\})$, define the
Poisson random measure generated  by $Z(t)$ as
\begin{equation*}
N(t,\GG):=\sum_{s\in(0,t]}
{{\mathbf 1}}_{\GG}(\3 Z(s)),
\end{equation*}
where $$\bigtriangleup Z(t):=Z(t)-Z(t-) \ \hbox{ for } \ t\ge0
 \hbox{ with }\
Z(t-)=\lim_{s\uparrow t}Z(s),$$ and the compensated Poisson random
measure  by
\begin{equation*}
\tt N(t,\GG):=N(t,\GG)-t\nu(\GG).
\end{equation*}
By the L\'{e}vy-It\^o decomposition \cite[Theorem 2.4.16,
p.108]{A09}, one gets
\begin{equation}\label{eq11}
Z(t)=\int_{|z|\le1}z\tt N(t,\d z)+\int_{|z|>1}zN(t,\d z).
\end{equation}
 By
the variation-of-constants formula (see, e.g., Gushchin and
K\"{u}chler \cite{GK}), the solution of \eqref{eq1} can be written
explicitly as
\begin{equation}\label{eq6}
X(t;\xi)=\xi(0)r(t)+\int_{-\tau}^0\int_\theta^0
r(t+\theta-s)\xi(s)\d s\mu(\d \theta)+\int_0^tr(t-s)\d Z(s),
\end{equation}
where $\{r(t)\}_{t\ge-\tau}$ is the fundamental solution  of
\eqref{a1}. Substituting \eqref{eq11} into \eqref{eq6} leads to
\begin{equation*}
\begin{split}
X(t;\xi)=\xi(0)r(t)&+\int_{-\tau}^0\int_s^0r(t+s-u)\xi(u)\d u\mu(\d
s)\\
&
\quad+\int_0^t\int_{|z|\le1}r(t-s)z\tt N(\d s,\d
z)+\int_0^t\int_{|z|>1}r(t-s)zN(\d s,\d z)\\
&=:\sum_{j=1}^4I_j(t).
\end{split}
\end{equation*}
By \eqref{eq3}, it is easy to see that
\begin{equation*}
\sup_{t\ge0}\E(|I_1(t)|+|I_2(t)|)<\8.
\end{equation*}
Note from   the H\"older inequality, the It\^o isometry and
\eqref{eq3} that
\begin{equation*}
\begin{split}
\sup_{t\ge0}\E|I_3(t)|\le\bb^{1/2}\sup_{t\ge0}\Big(\int_0^t|r(t-s)|^2\d
s\Big)^{1/2}<\8,
\end{split}
\end{equation*}
where $$\bb:=\int_{|z|\le1}|z|^2\nu(\d z)<\8$$
since
$\nu(\cdot)$ is a L\'{e}vy measure.  Also, by \eqref{eq3} it follows
from \eqref{c1} that
\begin{equation*}
\sup_{t\ge0}\E|I_4(t)|\le\int_{|z|>1}|z|\nu(\d
z)\sup_{t\ge0}\Big(\int_0^t|r(t-s)|\d s\Big)<\8.
\end{equation*}
Hence we arrive at
\begin{equation}\label{eq7}
\dd:=\sup_{t\ge0}\E|X(t)|<\8.
\end{equation}
By \eqref{eq1} and \eqref{eq11}, for any $t\ge\tau$ we derive from
\eqref{eq7} that
\begin{equation}\label{w1}
\begin{split}
\E\|X_t(\xi)\|_\8 &\le\E|X(t-\tau;\xi)|+\E\Big(\sup_{t-\tau\le s\le
t}\Big|\int_{t-\tau}^s\int_{-\tau}^0X(u+\theta)\mu(\d\theta)\Big)\d
u\Big|\\
&\quad+\sup_{t-\tau\le s\le t}\Big|\int_{t-\tau}^s\int_{|z|\le1}z\tt
N(\d t,\d z)\Big|+\sup_{t-\tau\le s\le
t}\Big|\int_{t-\tau}^s\int_{|z|>1}zN(\d u,\d z)\Big|\Big)\\
&\le c+\E\Big(\sup_{t-\tau\le s\le
t}\Big|\int_{t-\tau}^s\int_{|z|\le1}z\tt N(\d u,\d
z)\Big|\Big)+\tau\int_{|z|>1}|z|\nu(\d z),
\end{split}
\end{equation}
where we have used that
\begin{equation*}
\E\Big(\sup_{t-\tau\le s\le t}\Big|\int_{t-\tau}^s\int_{|z|>1}zN(\d
u,\d z)\Big|\Big)\le\E\int_{t-\tau}^t\int_{|z|>1}|z|N(\d u,\d
z)=\tau\int_{|z|>1}|z|\nu(\d z).
\end{equation*}
Next, by the Burkhold-Davis-Gundy inequality (see, e.g.,
\cite[Theorem 48, p.193]{P04}) and the Jensen inequality, one finds
that
\begin{equation}\label{w2}
\begin{split}
&\!\!\! \E\Big(\sup_{t-\tau\le s\le t}\Big|\int_{t-\tau}^s\int_{|z|\le1}z\tt
N(\d u,\d z)\Big|\Big)\\
&\ \le c\E\ss{\int_{t-\tau}^t\int_{|z|\le1}
|z|^2N(\d u,\d z)}\\
&\ \le c\ss{\E\int_{t-\tau}^t\int_{|z|\le1} |z|^2N(\d u,\d
z)}\\
&\ =c\ss{\tau\int_{|z|\le1} |z|^2\nu(\d z)}.
\end{split}
\end{equation}
Consequently,  \eqref{w1} and  \eqref{w2} yield that
\begin{equation}\label{w3}
\sup_{t\ge0}\E\|X_t(\xi)\|_\8<\8.
\end{equation}
 Set
$$\E_s\cdot:=\E(\cdot|\F_s),\ \ s\ge0.$$
For $\theta\in[-\tau,0]$ and $\wdt \theta\in[0,\3]$, where $\3>0$ is
an arbitrary constant such that $\theta+\3\in[-\tau,0]$,
 \eqref{eq1} and
\eqref{eq11} lead to
\begin{equation*}
\begin{split} &\!\!\!
\E_{t+\theta}|X_t(\theta+\wdt \theta)-X_t(\theta)|\\ &
\ =\E_{t+\theta}|X(t+\theta+\wdt \theta)-X(t+\theta)|\\
& \ \le
\int_{t+\theta}^{t+\theta+\3}\E_{t+\theta}\Big\{\Big|\int_{[-\tau,0]}X(s+\theta)\mu(\d\theta)\Big|
+\int_{|z|\le1}z\tt N(\d s,\d z)+\int_{|z|>1}zN(\d s,\d
z)\Big\}\d s.
\end{split}
\end{equation*}
It follows that there is a $\gamma_0 (t,\3)$ satisfying
$$\E_{t+\theta}|X(t+\theta+\wdt \theta)-X(t+\theta)| \le \E_{t+\theta}\gamma_0(t,\3).$$
Taking expectation and $\limsup_{t\to \infty}$ followed by
$\lim_{\3\to 0}$, we obtain from the H\"older inequality and
\eqref{eq7} that
\begin{equation}\label{w4}
\lim_{\3\to0}\limsup_{t\to \infty}\E\gamma_0(t,\3)=0.
\end{equation}
In view of   Lemma \ref{tightness} and the time shift $t\mapsto
t-\tau$, we conclude from \eqref{w3} and \eqref{w4} that
$\{X_t(\xi)\}_{t\ge0}$ is tight under the Skorohod metric $\d_S$.
Finally, the desired assertion follows by combining the arguments of
Theorem \ref{diffusion} and that of Theorem \ref{jump}. Next, from
\eqref{eq3} and \eqref{eq6}
 we get that
\begin{equation}\label{eq13}
\begin{split}
\E\|X_t(\xi)-X_t(\eta)\|_\8
&=\E\Big(\sup_{-\tau\le\theta\le0}\Big|(\xi(0)-\eta(0))r(t+\theta)\\
&\qquad\qquad+\int_{[-\tau,0]}\int_u^0 r(t+\theta+u-s)(\xi(s)-\eta(s))\d
s\mu(\d u)\Big|\Big)\\
&\le c\e^{-\gg t}\|\xi-\eta\|_\8.
\end{split}
\end{equation}
This yields that
\begin{equation}\label{c2}
\lim_{t\to\8}\|\P(t,\xi,\cdot)-\P(t,\eta,\cdot)\|_{\mbox{var}}=\lim_{t\to\8}\sup_{\mbox{Lip}(\varphi)=1}|\E\varphi(X_t(\xi))-\E\varphi(X_t(\eta))|=0,
\end{equation}
where $\|\cdot\|_{\mbox{var}}$ denotes the total variation of a
signed measure and $\mbox{Lip}(\varphi)$ is the Lipschitz constant
of $\varphi$ with respect to the Skorohod metric $\d_S.$   If
$\pi^\prime(\cdot)\in\mathcal {P}(\D)$ is also a stationary
distribution, then, by the invariance, one has
\begin{equation}\label{c3}
\|\pi-\pi^\prime\|_{\mbox{var}}\le\int_{\D\times\D}\|\P(t,\xi,\cdot)-\P(t,\eta,\cdot)\|_{\mbox{var}}\pi(\d\xi)\pi(\d\eta)
\end{equation}
Thus, the uniqueness of stationary distribution follows from
\eqref{c2} and by taking $t\to\8$ in  \eqref{c3}.
\end{proof}

\begin{rem}
{\rm  By \eqref{c1},   $\E|Z(t)|<\8$ for all $t>0$
because of \cite[Theorem 2.5.2, p.132]{A09}.  \eqref{eq1}
incorporates retarded O-U processes driven by symmetric $\aa$-stable
processes, which have finite $p$th moment with $p\in(0,\aa)$, and
subordinate Brownian motions $W_{S(t)}$, i.e.,  $W(t)$ is a standard
Brownian motion and $S(t)$ is an $\aa/2$-stable subordinator (i.e.,
a real-valued L\'{e}vy process with nondecreasing sample paths). For
more details on stable distributions and subordinator, we refer to
Applebaum \cite[p.33-62]{A09}.  }
\end{rem}

\begin{rem}{\rm
In many applications, one often encounters the so-called jump diffusion models,
in which both Brownian type of noise and L\'evy process appear. In view of our
results in Section \ref{Sec2} and the current section, in lieu of \eqref{a5} or \eqref{eq1},
we can consider a process of the form
\begin{equation}\label{a5a}
\d X(t)=\Big(\int_{-\tau}^0X(t+\theta)\mu(\d\theta)\Big)\d
t+\si(X_t)\d W(t) +dZ(t),\ \ \ X_0=\xi\in\D.
\end{equation}
Continue to use the variation-of-constants formula.
Comparing to the development in Theorem \ref{jump}, we need to deal with
an additional term involving conditional expectation of an  integral in the verification of tightness.
This term can be easily handled by use of the Cauchy-Schwarz inequality and properties of Brownian motion.
The results of Theorem \ref{jump} continue to hold.
}\end{rem}

 \begin{rem}
 {\rm
 Examining the proof
 of Theorem \ref{jump}, the technique
employed therein applies to \eqref{eq1} with the L\'{e}vy triple
$(0,a,\nu)$ and a retarded SDE with jumps
\begin{equation}\label{c4}
\d X(t)=\Big(\int_{-\tau}^0X(t+\theta)\mu(\d\theta)\Big)\d
t+\si(X_{t-})\d Z(t),\ \ \ X_0=\xi\in\D,
\end{equation}
where $\si:\D\to\R$ is uniformly bounded  and
$$X_{t-}(\theta):=\lim_{s\uparrow t+\theta}X(s), \ \ \theta\in[-\tau,0].$$
}
\end{rem}

If the L\'{e}vy process $Z(t)$ has a finite second moment, the
uniform boundedness of $\si$ can indeed be removed as the following
theorem shows.

\begin{thm}
{\rm Let $v_0<0$ and \eqref{a6} hold  for a sufficiently small $L>0$
and arbitrary $\xi,\eta\in\D$,  and suppose further that
\begin{equation}\label{c5}
\int_{|z|\ge1}|z|^2\nu(\d z)<\8.
\end{equation}
Then \eqref{c4} has a unique stationary distribution $\pi\in\mathcal
{P}(\D)$. }
\end{thm}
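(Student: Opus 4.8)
The plan is to transcribe the proofs of Theorems \ref{diffusion} and \ref{jump}, replacing the It\^o isometry for Brownian integrals by the $L^2$ isometry for integrals against a square--integrable L\'evy process, which \eqref{c5} now makes available. First I would note that, since $\int_{z\neq0}(1\wedge|z|^2)\nu(\d z)<\8$ always holds for a L\'evy measure, condition \eqref{c5} gives $\int_{z\neq0}|z|^2\nu(\d z)<\8$, hence $\E|Z(t)|^2<\8$ and $\E|Z(t)|<\8$ for all $t$ by \cite[Theorem 2.5.2, p.132]{A09}; after compensating the large jumps one may write $Z(t)=ct+M(t)$, where $M$ is a square--integrable martingale with $\d\langle M\rangle_s=\si_Z^2\,\d s$ and $\si_Z^2:=a+\int_{z\neq0}z^2\nu(\d z)$. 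By the variation--of--constants formula \cite[Theorem 3.1]{RR} together with the L\'evy--It\^o decomposition, the solution of \eqref{c4} has the explicit representation
\begin{equation*}
X(t;\xi)=\xi(0)r(t)+\int_{-\tau}^0\int_\theta^0 r(t+\theta-s)\xi(s)\,\d s\,\mu(\d\theta)+\int_0^t r(t-s)\si(X_{s-}(\xi))\,\d Z(s),
\end{equation*}
with $\{r(t)\}_{t\ge-\tau}$ the fundamental solution of \eqref{a1}; since $v_0<0$, \eqref{eq3} supplies $\gg<0$ and $c>0$ with $|r(t)|\le c\e^{\gg t}$ for $t\ge-\tau$.

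Next I would establish the two $L^2$ estimates that powered Theorem \ref{diffusion}: $\sup_{t\ge0}\E|X(t;\xi)|^2<\8$ and, for some $\aa>0$, $\E|X(t;\xi)-X(t;\eta)|^2\le c\e^{-\aa t}\|\xi-\eta\|_\8^2$. Both come verbatim from the computation leading to \eqref{a2}--\eqref{a3} and \eqref{b4}: split the stochastic integral into the drift part $c\int_0^t r(t-s)\si(X_{s-})\,\d s$, controlled by H\"older and $|r|\le c\e^{\gg t}$, and the martingale part $\int_0^t r(t-s)\si(X_{s-})\,\d M(s)$, controlled by the isometry $\E|\,\cdot\,|^2=\si_Z^2\int_0^t|r(t-s)|^2\,\E|\si(X_{s-})|^2\,\d s$; then feed in the linear growth of $\si$ from \eqref{a6} and close with the generalized Gronwall Lemma \ref{Inq2}, using that $L$ is small enough that $\aa:=2v_0-cL>0$. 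To pass to the segment process I would, exactly as in \eqref{b5}, avoid iterating the representation formula (whose kernel $r(t-s)$ depends on the terminal time) and instead use \eqref{c4} directly to write, for $\theta\in[-\tau,0]$,
\begin{equation*}
X(t+\theta)-X(t-\tau)=\int_{t-\tau}^{t+\theta}\!\!\int_{-\tau}^0 X(u+v)\,\mu(\d v)\,\d u+\int_{t-\tau}^{t+\theta}\si(X_{u-})\,\d Z(u),
\end{equation*}
so that the supremum over $\theta$ of the stochastic term has a fixed integrand and succumbs to the Burkholder--Davis--Gundy inequality \cite[Theorem 48, p.193]{P04}; this upgrades the two estimates to $\sup_{t\ge0}\E\|X_t(\xi)\|_\8^2<\8$ and $\E\|X_t(\xi)-X_t(\eta)\|_\8^2\le c\e^{-\aa t}\|\xi-\eta\|_\8^2$.

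Existence then follows as in Theorem \ref{jump}. Tightness of $\{\mathcal L(X_t(\xi))\}_{t\ge\tau}$ in $(\D,\d_S)$ is obtained from Kurtz's criterion (Lemma \ref{tightness}): the uniform $L^1$ control of $\sup_{t-\tau\le s\le t}|X(s)|$ from the segment estimate gives the first hypothesis, while for the modulus condition one bounds $\E_{t+\theta}|X(t+\theta+\tilde\theta)-X(t+\theta)|$ by $\E_{t+\theta}\gamma_0(t,\tilde\theta)$, with $\gamma_0(t,\tilde\theta)$ assembled from the drift and jump contributions over an interval of length $\tilde\theta$, and $\E|Z(t)|<\8$ yields $\lim_{\tilde\theta\to0}\limsup_{t\to\8}\E\gamma_0(t,\tilde\theta)=0$. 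Tightness of the Ces\`aro averages $\mu_n(\cdot)=\frac1n\int_0^n\P(t,\xi,\cdot)\,\d t$, combined with the Markov property \cite[Proposition 3.3]{RR} and the eventual Feller property \cite[Proposition 3.5]{RR}, gives a stationary distribution $\pi\in\mathcal P(\D)$ via the Krylov--Bogoliubov theorem \cite[Theorem 3.1.1, p.21]{DZ}. Uniqueness is immediate from the contraction estimate: it forces $\lim_{t\to\8}\sup_{\mathrm{Lip}(\va)=1}|\E\va(X_t(\xi))-\E\va(X_t(\eta))|=0$, hence $\|\P(t,\xi,\cdot)-\P(t,\eta,\cdot)\|_{\mathrm{var}}\to0$, and integrating against two stationary measures as in \eqref{c3} shows they agree.

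The step I expect to be genuinely delicate is the passage from the pointwise--in--time bounds to the sup--norm (segment) bounds and then to $\D$--tightness, precisely because the kernel in the variation--of--constants representation carries the terminal time: one must recast the relevant increments through \eqref{c4} itself before applying BDG, and check that the BDG/Jensen bookkeeping for the compensated small jumps and the large jumps closes with the merely linearly growing $\si$ (rather than the bounded $\si$ of the preceding remark) — this is exactly where the smallness of $L$ and the strict inequality $\gg<0$ are used. The remaining details are a routine transcription of the arguments of Theorems \ref{diffusion} and \ref{jump}.
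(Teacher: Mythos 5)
Your proposal follows essentially the same route as the paper: the variation-of-constants representation combined with the L\'evy--It\^o decomposition, the $L^2$ isometry for the compensated jump integral in place of the Brownian It\^o isometry, the generalized Gronwall lemma to get $\sup_{t\ge0}\E|X(t;\xi)|^2<\8$ and the contraction estimate, passage to segment bounds through \eqref{c4} itself plus the Burkholder--Davis--Gundy inequality, and then the tightness/Krylov--Bogoliubov and total-variation arguments of Theorem \ref{jump}. If anything, you spell out more explicitly (the contraction estimate and the Kurtz verification) what the paper compresses into ``imitating the argument of Theorem \ref{jump}.''
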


\begin{proof}
By the variation-of-constants formula (see, e.g., \cite[Theorem
3.1]{RR}), one has
\begin{equation*}
X(t;\xi)=\xi(0)r(t)+\int_{-\tau}^0\int_\theta r(t+\theta-s)\xi(s)\d
s\mu(\d \theta)+\int_0^tr(t-s)\si(X_{s-})\d Z(s),
\end{equation*}
where $\{r(t)\}_{t\ge-\tau}$ is the fundamental solution  to
\eqref{a1}. By \eqref{c5},   the L\'{e}vy-It\^o decomposition (see,
e.g., \cite[Theorem 2.4.16, p.126]{A09}) gives that
\begin{equation}\label{a8}
Z(t)=at+W(t)+\int_{z\neq0}z\tt N(t,\d z),\ \ \ a\in\R.
\end{equation}
Thus, we have
\begin{equation*}
\begin{split}
X(t;\xi)&=\xi(0)r(t)+\int_{-\tau}^0\int_\theta r(t+\theta-s)\xi(s)\d
s\mu(\d \theta)+a\int_0^tr(t-s)\si(X_{s-})\d
s\\
&\quad+\int_0^tr(t-s)\si(X_{s-})\d
W(s)+\int_0^t\int_{z\neq0}r(t-s)\si(X_{s-})z\tt N(\d s,\d z).
\end{split}
\end{equation*}
Carrying out a similar argument to that of  \eqref{a3} and taking
\eqref{c5} into account, we can also deduce that
\begin{equation}\label{a4}
\sup_{t\ge0}\E|X(t,\xi)|^2<\8.
\end{equation}
From \eqref{c4} and \eqref{a8},  ones derive from \eqref{a4} that
\begin{equation*}
\begin{split}
\E\|X_t(\xi)\|_\8^2&\le c\Big\{1+\E\Big(\sup_{t-\tau\le s\le
t}\Big|\int_{t-\tau}^s\si(X_{s})\d
W(s)\Big|^2\Big)\\
&\qquad\quad+\E\Big(\sup_{t-\tau\le s\le
t}\Big|\int_{t-\tau}^s\int_{z\neq0}\si(X_{s-})z\tt N(\d s,\d
z)\Big|^2\Big)\Big\}\\
&=:c\{1+\Gamma_1(t)+\GG_2(t)\}.
\end{split}
\end{equation*}
Also, by the Burkhold-Davis-Gundy inequality (see, e.g.,
\cite[Theorem 48, p.193]{P04}), from \eqref{a6} and \eqref{a4} we
obtain that
\begin{equation}\label{w5}
\begin{split}
\GG_2(t) &\le c\E\int_{t-\tau}^t\int_{z\neq0} |
\si(X_{s-})|^2|z|^2N(\d s,\d z)\le c.
\end{split}
\end{equation}
Then, \eqref{a6}, \eqref{w5} and an application of the
Burkhold-Davis-Gundy inequality (see, e.g., \cite[Theorem 7.3,
p.40]{M08}) applies to $\GG_1(t)$ imply that
\begin{equation*}
\sup_{t\ge0}\E\|X_t(\xi)\|_\8^2<\8.
\end{equation*}
Finally, the desired assertion follows by  imitating the argument of
Theorem \ref{jump}.
\end{proof}

\begin{rem}
{\rm For the case that $\si:\D\to\R$ is uniformly bounded, Rei$\bb$
et al. \cite{RR} explored existence of a stationary distribution of
\eqref{c4} by considering the semi-martingale characteristics to
show the tightness of the segment processes. In their paper,
it was mentioned that ``For the latter the imposed
boundedness of $F$ can certainly be relaxed considered, but will
then depend on the large jumps of $L$, that is, on fine properties
of $\nu.$'' In this section, we give a positive answer to this
problem by
virtue of Kurtz's tightness criterion.
}
\end{rem}

\begin{rem}
{\rm  In this paper, for
notational simplicity, we only treated the
existence and uniqueness of stationary distributions for several
classes of real-valued retarded SDEs without dissipativity.
Our results can  be readily generalized to the multidimensional
cases.  The key is the use of a multidimensional variation-of-constants formula.
For the corresponding finite dimensional and infinite dimensional variation-of-constants
formulas,
 we refer the reader to
\cite[Chapter 6 and Chapter 9]{HL} and \cite{L,L10}, respectively.
}
\end{rem}

\end{document}